\newtheorem{thm}{Theorem}[section]
\newtheorem{exa}[thm]{Example}
\newtheorem{rem}[thm]{Remark}
\theoremstyle{definition}
\newtheorem{defn}{Definition}[section]
\newcommand{\scr}[1]{\mathscr #1}
\definecolor{wco}{rgb}{0.5,0.2,0.3}
\numberwithin{equation}{section} \theoremstyle{remark}
\newcommand{\ua}{\uparrow}
\title{{\bf   Harnack Inequality and Gradient Estimate for $G$-SDEs with Degenerate Noise}\footnote{Supported in
 part by  NNSFC (11801406).} }
\author{
{\bf     Xing Huang $^{1,a)}$, Fen-Fen Yang $^{1,b)}$   }\\
\footnotesize{  1)Center for Applied Mathematics, Tianjin University, Tianjin 300072, China}\\
\footnotesize{ $^{a)}$xinghuang@tju.edu.cn, $^{b)}$yangfenfen@tju.edu.cn}}
\begin{document}
\allowdisplaybreaks
\def\R{\mathbb R}  \def\ff{\frac} \def\ss{\sqrt} \def\B{\mathbf
B}
\def\N{\mathbb N} \def\kk{\kappa} \def\m{{\bf m}}
\def\ee{\varepsilon}\def\ddd{D^*}
\def\dd{\delta} \def\DD{\Delta} \def\vv{\varepsilon} \def\rr{\rho}
\def\<{\langle} \def\>{\rangle} \def\GG{\Gamma} \def\gg{\gamma}
  \def\nn{\nabla} \def\pp{\partial} \def\E{\mathbb E}
\def\d{\text{\rm{d}}} \def\bb{\beta} \def\aa{\alpha} \def\D{\scr D}
  \def\si{\sigma} \def\ess{\text{\rm{ess}}}
\def\beg{\begin} \def\beq{\begin{equation}}  \def\F{\scr F}
\def\Ric{\text{\rm{Ric}}} \def\Hess{\text{\rm{Hess}}}
\def\e{\text{\rm{e}}} \def\ua{\underline a} \def\OO{\Omega}  \def\oo{\omega}
 \def\tt{\tilde} \def\Ric{\text{\rm{Ric}}}
\def\cut{\text{\rm{cut}}} \def\P{\mathbb P} \def\ifn{I_n(f^{\bigotimes n})}
\def\C{\scr C}   \def\G{\scr G}   \def\aaa{\mathbf{r}}     \def\r{r}
\def\gap{\text{\rm{gap}}} \def\prr{\pi_{{\bf m},\varrho}}  \def\r{\mathbf r}
\def\Z{\mathbb Z} \def\vrr{\varrho} \def\ll{\lambda}
\def\L{\scr L}\def\Tt{\tt} \def\TT{\tt}\def\II{\mathbb I}
\def\i{{\rm in}}\def\Sect{{\rm Sect}}  \def\H{\mathbb H}
\def\M{\scr M}\def\Q{\mathbb Q} \def\texto{\text{o}} \def\LL{\Lambda}
\def\Rank{{\rm Rank}} \def\B{\scr B} \def\i{{\rm i}} \def\HR{\hat{\R}^d}
\def\to{\rightarrow}\def\l{\ell}\def\iint{\int}
\def\EE{\scr E}\def\no{\nonumber}
\def\A{\scr A}\def\V{\mathbb V}\def\osc{{\rm osc}}
\def\BB{\scr B}\def\Ent{{\rm Ent}}\def\3{\triangle}\def\H{\scr H}
\def\U{\scr U}\def\8{\infty}\def\1{\lesssim}\def\HH{\mathrm{H}}
 \def\T{\scr T}
\maketitle

\begin{abstract} In this paper, the Harnack inequalities for $G$-SDEs with degenerate noise are derived by method of coupling by change of measure. Moreover, the gradient estimate for the associated nonlinear semigroup $\bar{P}_t$
$$|\nabla \bar{P}_t f|\leq c(p,t)(\bar{P}_t |f|^p)^{\frac{1}{p}}, \ \ p>1, t>0$$
is also obtained for bounded and continuous function $f$. As an application of Harnack inequality, we prove the weak existence of degenerate $G$-SDEs under some integrable conditions. Finally, an example is presented. All of the above results extends the existed results in the linear expectation setting.
\end{abstract} \noindent
 AMS subject Classification:\  60H10, 60H15.   \\
\noindent
 Keywords: Harnack inequality, Degenerate noise, $G$-SDEs, Gradient estimate, Weak solution, Invariant expectation.
 \vskip 2cm

\section{Introduction}
Since Peng \cite{peng2, peng1, peng4}  established the fundamental theory  of $G$-Brownian motion and  SDEs driven by it ($G$-SDEs, in short), the study of  $G$-expectation has received much attention, see a summary paper \cite{P}  and references within for details.
The $G$-expectation has been applied in many areas, for instance, stochastic optimization \cite{HJ, HJ2}, financial markets with volatility uncertainty \cite{Denis} and the Feyman-Kac formula \cite{HJPS}.

Recently, using method of coupling by change of measure introduced by Wang \cite[Chapter 1]{Wbook}, Song \cite{song} studied the gradient estimates for nonlinear diffusion semigroups, where the noise is  assumed to be  non-degenerate.
Quite recently, under the nonlinear expectation framework, Yang \cite{Yang} obtained the dimensional-free Harnack inequality for $G$-SDEs with non-degenerate noise.

On the other hand, the stochastic Hamiltonian system in the linear probability space, a typical model of degenerate diffusion system, has been investigated in \cite{GW,W2,WZ1}.

In this paper, we intend to investigate Harnack inequalities and gradient estimate for $G$-SDEs with degenerate noise, i.e. the stochastic Hamiltonian system driven by $G$-Brownian motion. The method is also coupling by change of measure, in which the Girsanov transform in \cite{HJPS} palys a crucial role. Due to the lack of additivity of nonlinear expectation, the Bismut formula \cite[(1.8), (1.14)]{Wbook}, which is an important technique to get gradient estimate, can not be proved either by coupling by change of measure or Malliavin calculus in the $G$-SDEs. Instead, we directly estimate the local Lipschitz constant defined below. Moreover, as an application of Harnack inequality, we will prove the existence of weak solution for degenerate $G$-SDEs perturbed by a  drift which only satisfies some integrable condition with respect to a reference nonlinear expectation.

Since the quadratic variation process of $G$-Brownian motion is a stochastic process, the $G$-SDEs generally contain two drift terms: $\d t$ and quadratic variation process. Moreover, the Girsanov transform is different from the one in the linear expectation case, see Theorem \ref{lem2} below for more details.
\section{Preparations}
\subsection{$G$-Expectation and $G$-Brownian motion}
Before moving on, we recall some basic facts on $G$-expectation and $G$-Brownian motion. Let $\Omega=C_0([0,\infty);\mathbb{R}^d)$, the $\mathbb{R}^d$-valued and continuous functions on $[0,\infty)$ vanishing at zero, equipped with metric
$$\rho(\omega^1,\omega^2)=\sum_{n=1}^\infty\frac{1}{2^n}\left[\max_{t\in[0,n]}|\omega^1_t-\omega^2_t|\wedge1 \right],\ \ \omega^1,\omega^2\in\Omega.$$
For any $T>0$, set
 $$L_{ip}(\Omega_T) =\{\omega\to \varphi(\omega_{t_1}, \cdot \cdot \cdot, \omega_{t_n}):n\in \mathbb{N}^+, t_1,\cdot \cdot \cdot, t_n\in [0,T],\varphi \in C_{b,lip}(\mathbb{R}^{d}\otimes \R^n)\},$$
 and $$L_{ip}(\Omega)=\bigcup_{T>0}L_{ip}(\Omega_T),$$
where  $C_{b,lip}(\mathbb{R}^{d}\otimes\R^n)$ denotes  the set of bounded and Lipschitz continuous functions on $\mathbb{R}^{d}\otimes \R^n$.
Let $\mathbb{S}^d$ be the collection of all $d\times d$ symmetric matrices and $\mathbb{S}_+^d\subset \mathbb{S}^d$ denote all $d\times d$ positive definite and symmetric matrices.
Fix $\underline{\sigma}, \overline{\sigma}\in \mathbb{S}_+^d$ with $\underline{\sigma}<\overline{\sigma}$ and define
\begin{equation}\label{G(A)}
 G(A):=\frac{1}{2}\sup _{\gamma\in [\underline{\sigma}, \bar{\sigma}]}\mbox{trace}(\gamma^2 A), \ A\in\mathbb{S}^d.
\end{equation}
Then it is not difficult to see
  \begin{equation} \label{Gnon}
     G(A)-G(\bar{A})\geq \frac{\lambda_0(\underline{\sigma}^2)}{2} \mbox{trace}[A-\bar{A}],\ A\geq \bar{A}, A,\bar{A} \in \mathbb{S}^d,
  \end{equation}
where $\lambda_0(\underline{\sigma}^2)>0$ is the minimal eigenvalue of $\underline{\sigma}^2$.

 Let $\bar{\E}^G$ be the nonlinear expectation on $\Omega$ such that coordinate process $B=(B_t)_{t\geq 0}$, i.e. $B_t(\omega)=\omega_t, \omega\in \Omega$, is a $d$-dimensional $G$-Brownian motion on $(\Omega, L_G^1(\Omega),\bar{\E}^G)$, where $L_G^1(\Omega)$ is the completion of $L_{ip}(\Omega)$  under the norm $(\bar{\E}^G|\cdot|)$. See \cite{song} for details on the construction of $\bar{\E}^G$. For any $p\geq 1$, let $L_G^p(\Omega)$ be the completion of $L_{ip}(\Omega)$  under the norm $(\bar{\E}^G|\cdot|^p)^{\frac{1}{p}}$. Similarly, we can define $L_G^p(\Omega_T)$ for any $T>0$.

Let
\begin{equation*}\label{equa11} M_{G}^{p,0}([0,T])=
\Big\{\eta_t:=\sum_{j=0}^{N-1} \xi_{j} I_{[t_j, t_{j+1})}(t);
~\xi_{j}\in L_{G}^p(\Omega_{t_{j}}),
 N\in\mathbb{N}^+,\ 0=t_0<t_1<\cdots <t_N=T \Big\},
\end{equation*}
and $M_G^p([0,T])$ be the completion of $M_G^{p,0}([0,T])$ under the norm
$$\|\eta\|_{M_G^p([0,T])}:=\left(\bar{\E}^G\int_{0}^{T}|\eta_{t}|^p\d t\right)^{\frac{1}{p}}.$$
Moreover, let $$ M_G^2([0,T])^d=\left\{X=(X^1,X^2,\cdots,X^d), X^i\in M_G^2([0,T]),1\leq i\leq d\right\}.$$
Let $\mathcal{M}$ be the collection of all probability measures on  $(\Omega, \mathcal{B}(\Omega))$.
According to \cite{15,HP},    there exists a weakly compact subset $\mathcal{P}\subset \mathcal{M}$   such that
\begin{align}\label{rep}\bar{\E}^G[X]=\sup_{P\in \mathcal{P}}\mathbb{E}_P[X], \ X\in L_G^1(\Omega),\end{align}
where $\mathbb{E}_P$ is the linear expectation under probability measure $P \in \mathcal{P}$. $\mathcal{P}$ is called a set that represents $\bar{\E}^G$.
In fact, let $W^0$ be a $d$-dimensional Brownian motion on a complete filtration probability space $(\OO,\{\F_t\}_{t\geq 0},\P)$, and $\mathbb{H}$ be the set of all progressively measurable stochastic processes valued in $[\underline{\sigma}, \bar{\sigma}]$. For any $\theta\in \mathbb{H}$, define $\P_{\theta}$ as the law of $\int_0^\cdot \theta_s \d W^0_s$. Then by \cite{15,HP}, we can take $\mathcal{P}= \{\P_\theta, \theta\in\mathbb{H}\}$, i.e.
\begin{align}\label{rep2}
\bar{\E}^G[X]=\sup_{\theta\in \mathbb{H}}\mathbb{E}_{\P_\theta}[X], \ X\in L_G^1(\Omega).
\end{align}
The associated  Choquet capacity to $\bar{\E}^G$ is defined by
$$\mathcal{C}(A)=\sup_{P\in \mathcal{P}}P(A), \ A\in \mathcal{B}(\Omega).$$
A set $A\subset \Omega$ is called polar if $\mathcal{C}(A)=0$,  and we say that a property   holds  $\mathcal{C}$-quasi-surely ($\mathcal{C}$-q.s.)
if it holds outside a polar set, see \cite{15} for more details on capacity.

Finally, letting $\<B\>$ be the quadratic variation process of $B$, then by \eqref{Gnon} and \cite[Chapter III, Corollary 5.7]{peng4}, we have $\mathcal{C}$-q.s.
\begin{align}\label{B}
\underline{\sigma}^2<\frac{\d}{\d t}\langle B\rangle_t\leq\bar{\sigma}^2.
\end{align}
\subsection{Girsanov's Transform}
The following Girsanov's  transform comes from \cite[Proposition 5.10]{O}.
\begin{thm} \label{Gir}
Let $\{g_t\}_{t\leq T}\in M_G^2([0,T])^d$. If there exists a constant $\delta>0$ such that
$$\bar{\E}^G\exp\left\{\left(\frac{1}{2}+\delta\right)\int_0^T\<g_u,\d \langle B\rangle_ug_u\>\right\}<\infty.$$
Then
$$
\bar{B}:=B+\int_{0}^{\cdot}\d \langle B\rangle_ug_u
$$
is a $G$-Brownian motion on $[0,T]$ under $\tilde{\E}[\cdot]=\bar{\E}^{G}[\tilde{R}_T(\cdot)]$, where
\begin{align*}
\tilde{R}_T&=\exp\bigg[-\int_0^T\left\< g_u, \d
B_u\right\>-\frac{1}{2}\int_0^T\<g_u,\d \langle B\rangle_ug_u\>\bigg].
\end{align*}
\end{thm}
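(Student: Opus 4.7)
My plan is to reduce the nonlinear Girsanov statement to a uniform family of classical Girsanov transforms via the representation \eqref{rep2}. For each $\theta\in\mathbb H$ the law $\P_\theta\in\mathcal P$ is that of $M^\theta:=\int_0^\cdot\theta_s\,\d W^0_s$ on the reference space $(\OO,\{\F_t\},\P)$, so under $\P_\theta$ the coordinate $B$ has the same law as $M^\theta$ and $\<B\>_t$ corresponds to $\int_0^t\theta_s^2\,\d s$. Consequently $\tilde R_T$ pulled back along $\omega\mapsto M^\theta(\omega)$ becomes the classical Dol\'eans--Dade exponential
$$
\mathcal Z^\theta_T=\exp\!\left[-\int_0^T\<\theta_u g_u(M^\theta),\d W^0_u\>-\ff12\int_0^T|\theta_u g_u(M^\theta)|^2\,\d u\right],
$$
and the random variable $\int_0^T\<g_u,\d\<B\>_u g_u\>$ pulls back to $\int_0^T|\theta_u g_u(M^\theta)|^2\,\d u$. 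The hypothesis together with \eqref{rep2} therefore yields the uniform classical Novikov bound
$$
\sup_{\theta\in\mathbb H}\E_\P\exp\!\left[\left(\ff12+\dd\right)\!\int_0^T|\theta_u g_u(M^\theta)|^2\,\d u\right]<\8,
$$
so each $\mathcal Z^\theta_T$ is a genuine $\P$-martingale with $\E_\P[\mathcal Z^\theta_T]=1$, and hence $\bar\E^G[\tilde R_T]=1$.

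Applying classical Girsanov for fixed $\theta$, the process $\tilde W^{0,\theta}:=W^0+\int_0^\cdot\theta_u g_u(M^\theta)\,\d u$ is a standard Brownian motion under $\d\tilde\P^\theta:=\mathcal Z^\theta_T\,\d\P$, and the shift $\bar B=B+\int_0^\cdot\d\<B\>_u g_u$ pulls back to
$$
\bar B_t(M^\theta)=M^\theta_t+\int_0^t\theta_u^2 g_u(M^\theta)\,\d u=\int_0^t\theta_u\,\d\tilde W^{0,\theta}_u.
$$
Because $\theta$ remains progressive and $[\underline\sigma,\bar\sigma]$-valued on $(\OO,\{\F_t\},\tilde\P^\theta)$, the law of $\int_0^\cdot\theta_u\,\d\tilde W^{0,\theta}_u$ under $\tilde\P^\theta$ lies in the family representing $\bar\E^G$. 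Combining this with the pullback identities, for any $X\in L_{ip}(\OO_T)$ I would then verify
$$
\tilde\E[X(\bar B)]=\bar\E^G[\tilde R_T X(\bar B)]=\sup_{\theta\in\mathbb H}\E_\P\!\left[\mathcal Z^\theta_T X\!\left(\textstyle\int_0^\cdot\theta\,\d\tilde W^{0,\theta}\right)\right]=\sup_{\theta\in\mathbb H}\E_{\P_\theta}[X(B)]=\bar\E^G[X(B)].
$$

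Extending this from $L_{ip}(\OO_T)$ to all of $L_G^1(\OO_T)$ by density identifies the $\tilde\E$-distribution of $\bar B$ with the $\bar\E^G$-distribution of $B$, so $\bar B$ is a $d$-dimensional $G$-Brownian motion under $\tilde\E$ on $[0,T]$. The main obstacle is the rigorous interplay between the $G$-framework and the $\theta$-parametrised classical picture: showing that the stated $G$-exponential integrability genuinely furnishes a classical Novikov condition uniform in $\theta\in\mathbb H$, that after each classical change of measure the transformed law still lies in the family $\mathcal P$ representing $\bar\E^G$, and that the intermediate supremum in the displayed identity actually reconstitutes $\sup_\theta\E_{\P_\theta}[X(B)]$. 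These steps rely on the weak compactness of $\mathcal P$ from \eqref{rep}, the fact that Girsanov change of measure preserves $\mathbb H$, and the density of $L_{ip}(\OO_T)$ in $L_G^1(\OO_T)$.
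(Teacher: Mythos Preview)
The paper does not prove this theorem: it is introduced with the sentence ``The following Girsanov's transform comes from \cite[Proposition 5.10]{O}'' and then used as a black box (it is the input for the proof of Theorem~\ref{lem2}). So there is no in-paper argument to compare your sketch against; the paper's ``proof'' is a citation to Osuka.

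Your outline is well motivated but has a genuine gap at the step
\[
\sup_{\theta\in\mathbb H}\E_\P\!\left[\mathcal Z^\theta_T\, X\!\left(\textstyle\int_0^\cdot\theta\,\d\tilde W^{0,\theta}\right)\right]
=\sup_{\theta\in\mathbb H}\E_{\P_\theta}[X(B)].
\]
Under $\tilde\P^\theta$ the process $\theta$ is still $\{\F_t\}$-progressive and $[\underline\sigma,\bar\sigma]$-valued, so the $\tilde\P^\theta$-law of $\int_0^\cdot\theta\,\d\tilde W^{0,\theta}$ does belong to the representing set $\mathcal P$; but it is \emph{not} in general the particular measure $\P_\theta$, since $\P_\theta$ is by definition the $\P$-law of $\int_0^\cdot\theta\,\d W^0$. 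What you have actually shown is the one-sided bound $\tilde\E[X(\bar B)]\le \bar\E^G[X(B)]$. The reverse inequality would require that every element of $\mathcal P$ arises as the $\tilde\P^\theta$-law of $\int_0^\cdot\theta\,\d\tilde W^{0,\theta}$ for some $\theta\in\mathbb H$, i.e.\ a surjectivity or inversion statement for the Girsanov-shifted parametrisation, and you have not supplied this. A secondary issue is that the pull-back $g_u(M^\theta)$ is only immediately meaningful for $g\in M_G^{2,0}$; passing to the completion $M_G^2$ while keeping the $\theta$-wise identities and the uniform Novikov bound requires additional care. These are precisely the ``main obstacles'' you flag at the end, and they are not merely technicalities: closing them is essentially the whole content of a $G$-Girsanov theorem, which is why the paper defers to \cite{O} rather than arguing directly from \eqref{rep2}.
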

According to \cite[Remark 5.3]{HJPS},
letting $\hat{\Omega}=C_0([0,\infty),\mathbb{R}^{2d})$,  we can construct an auxiliary $\hat{G}$-expectation space $(\hat{\Omega}, L_{\hat{G}}^1(\hat{\Omega}),\hat{\E}^{\hat{G}})$ with
$$\hat{G}(A):=\frac{1}{2}\sup _{\gamma\in [\underline{\sigma}, \bar{\sigma}]}\mathrm{trace}\left[A
\left(
  \begin{array}{cc}
    \gamma^2 & 1 \\
    1 & \gamma^{-2} \\
  \end{array}
\right)\right],\ A\in \mathbb{S}^{2d},$$
and a $d$-dimensional process $B'$ such that $\left(
                                                \begin{array}{c}
                                                  B \\
                                                  B' \\
                                                \end{array}
                                              \right)
$ is a $2d$-dimensional $\hat{G}$-Brownian motion and $\left\<B,B'\right\>_t=tI_{d\times d}$ under $\hat{\E}^{\hat{G}}$. In addition, 
the distribution of $B$ under $\bar{\E}^G$ is equal to that of $B$ under $\hat{\E}^{\hat{G}}$. Moreover, letting
\begin{equation}\label{G'}
 \tilde{G}(A)=\frac{1}{2}\sup _{\gamma\in [\underline{\sigma}, \bar{\sigma}]}\mathrm{trace}\left[A\gamma^{-2}\right],\ \ A\in \mathbb{S}^{d},
\end{equation}
  then $B'$ is a $\tilde{G}$-Brownian motion under $\hat{\E}^{\hat{G}}$. Letting $\hat{\mathcal{C}}$ be the associated Choquet capacity to $\hat{\E}^{\hat{G}}$, then we have $\hat{\mathcal{C}}$-q.s.
\begin{align}\label{B'}
\overline{\sigma}^{-2}\leq \frac{\d \langle B'\rangle_t}{\d t}\leq \underline{\sigma}^{-2}.
\end{align}
As a corollary of Theorem \ref{Gir}, we have the following Girsanov's transform, which will be used in the sequel.
\begin{thm} \label{lem2}
Let $\{g^i_t\}_{t\leq T}\in M_G^2([0,T])^d, i=1,2$. If
\begin{align}\label{Nov}
&\hat{\E}^{\hat{G}}\exp\bigg\{\left(\frac{1}{2}+\delta\right)\int_0^T \left(\<g^1_s,\d \langle B'\rangle_sg^1_s\>+\<g^2_s,\d \langle B\rangle_sg^2_s\>+2\<g^1_s,g^2_s\>\d s\right)\bigg\}<\infty.
\end{align}
Then
$$
\breve{B}:=B+\int_{0}^{\cdot}g^1_u\d u+\int_{0}^{\cdot}g^2_u\d \langle B\rangle_u
$$
is a $G$-Brownian motion on $[0,T]$ under $\breve{\E}[\cdot]=\hat{\E}^{\hat{G}}[\breve{R}_T(\cdot)]$, where
\begin{align*}
\breve{R}_T&=\exp\bigg[-\int_0^T\left\< \left(
             \begin{array}{c}
               g^1_u  \\
                 g^2_u\\
             \end{array}
           \right),
           \d \left(
             \begin{array}{c}
               B'_u,\\
               B_u  \\
             \end{array}
           \right)
\right\>\\
&\qquad\qquad-\frac{1}{2}\int_0^T \left(\<g^1_s,\d \langle B'\rangle_sg^1_s\>+\<g^2_s,\d \langle B\rangle_sg^2_s\>+2\<g^1_s,g^2_s\>\d s\right)\bigg].
\end{align*}
\end{thm}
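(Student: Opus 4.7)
The plan is to deduce Theorem \ref{lem2} from the single-process Girsanov transform of Theorem \ref{Gir}, applied on the enlarged space $(\hat\OO, L_{\hat G}^1(\hat\OO), \hat\E^{\hat G})$ to the $2d$-dimensional $\hat G$-Brownian motion $(B,B')^\top$. Set $g_u := (g^2_u, g^1_u)^\top \in M_{\hat G}^2([0,T])^{2d}$; everything then becomes a formal substitution, provided one keeps track of the block structure of the joint quadratic variation.

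First I would expand the drift appearing in Theorem \ref{Gir} when $B$ is replaced by $(B,B')^\top$. Using
$$\d\left\<(B,B')^\top\right\>_u = \begin{pmatrix} \d\<B\>_u & I_{d\times d}\,\d u \\ I_{d\times d}\,\d u & \d\<B'\>_u \end{pmatrix},$$
which follows from $\<B,B'\>_u = uI_{d\times d}$ recorded before the statement, one obtains
$$\int_0^\cdot \d\<(B,B')^\top\>_u\, g_u = \left(\int_0^\cdot g^2_u\,\d\<B\>_u+\int_0^\cdot g^1_u\,\d u,\ \ \int_0^\cdot g^1_u\,\d\<B'\>_u+\int_0^\cdot g^2_u\,\d u\right)^{\!\top},$$
whose first $d$ components are exactly $\breve B - B$. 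The same block expansion gives
$$\<g_u,\d\<(B,B')^\top\>_u\, g_u\> = \<g^2_u,\d\<B\>_u g^2_u\> + \<g^1_u,\d\<B'\>_u g^1_u\> + 2\<g^1_u, g^2_u\>\,\d u,$$
so the hypothesis \eqref{Nov} is precisely the Novikov-type integrability required by Theorem \ref{Gir} applied to $(B,B')^\top$ with drift $g$.

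Theorem \ref{Gir} then asserts that the shifted $2d$-dimensional process is a $\hat G$-Brownian motion under $\hat\E^{\hat G}[\tilde R_T(\cdot)]$, where the density $\tilde R_T$ is the $2d$-dimensional analogue of the one in Theorem \ref{Gir}. Expanding the stochastic integral $\int_0^T \<g_u, \d(B,B')^\top_u\> = \int_0^T \<g^2_u, \d B_u\> + \int_0^T \<g^1_u, \d B'_u\>$ identifies $\tilde R_T$ with the $\breve R_T$ written in the statement. To conclude that the first $d$ coordinates form a $G$-Brownian motion (rather than merely half of a $\hat G$-Brownian motion), I would use the elementary identity
$$\hat G\!\left(\begin{pmatrix} A & 0 \\ 0 & 0\end{pmatrix}\right) = G(A),\qquad A\in\mathbb S^d,$$
which implies that the first $d$ components of any $\hat G$-Brownian motion form a $G$-Brownian motion — the same principle already used in the excerpt to match the laws of $B$ under $\bar\E^G$ and under $\hat\E^{\hat G}$.

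The only genuinely delicate point is bookkeeping around the cross-covariation $\<B,B'\>_u = uI_{d\times d}$: it is responsible for both the off-diagonal blocks above and the cross-term $2\<g^1_u, g^2_u\>\,\d u$ that appears in the exponent of $\breve R_T$ and in the hypothesis \eqref{Nov}, and the signs, the factor of two, and the orientation of $(g^1,g^2)$ relative to the pair $(B',B)$ written in the statement must all be tracked consistently. Once this is done, the theorem follows by direct substitution into Theorem \ref{Gir}.
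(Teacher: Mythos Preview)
Your proposal is correct and follows essentially the same route as the paper: set $W=(B,B')^\top$, take $g=(g^2,g^1)^\top$, expand the block quadratic variation to identify both the drift and the Novikov exponent, and then apply Theorem~\ref{Gir} to $W$ so that the first $d$ coordinates of the shifted $\hat G$-Brownian motion give $\breve B$. Your extra remark that $\hat G\!\big(\begin{smallmatrix}A&0\\0&0\end{smallmatrix}\big)=G(A)$ is exactly what justifies the paper's ``in particular'' step.
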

\begin{proof}
Letting $W=\left(
             \begin{array}{c}
               B \\
               B' \\
             \end{array}
           \right),$
we have
$$\<W\>_t=\left(
          \begin{array}{cc}
            \<B\>_t & tI_{d\times d} \\
            tI_{d\times d} & \<B'\>_t \\
          \end{array}
        \right)
$$
and \begin{align*}&\int_0^T\left\<\left(
                                    \begin{array}{c}
                                      g^2 \\
                                      g^1 \\
                                    \end{array}
                                  \right),\d \<W\>\left(
                                    \begin{array}{c}
                                      g^2 \\
                                      g^1 \\
                                    \end{array}
                                  \right)\right\>\\
                                  &=\int_0^T \left(\<g^1_s,\d \langle B'\rangle_sg^1_s\>+\<g^2_s,\d \langle B\rangle_sg^2_s\>+2\<g^1_s,g^2_s\>\d s\right).
\end{align*}
Let
$$\tilde{W}=W+\int_0^\cdot\d \<W\>\left(
                                    \begin{array}{c}
                                      g^2 \\
                                      g^1 \\
                                    \end{array}
                                  \right)
.$$
In view of \eqref{Nov} and applying Theorem \ref{Gir} for $\left(W,\left(
                                    \begin{array}{c}
                                      g^2 \\
                                      g^1 \\
                                    \end{array}
                                  \right)\right)$ replacing $(B,g)$, we conclude that $\tilde{W}$ is a $2d$-dimensional $\hat{G}$-Brownian motion on $[0,T]$ under $\breve{\E}$ defined in Theorem \ref{lem2}. In particular,
$$\breve{B}:=B+\int_{0}^{\cdot}g^1_u\d u+\int_{0}^{\cdot}g^2_u\d \langle B\rangle_u$$
is a $G$-Brownian motion on $[0,T]$ under $\breve{\E}$.
\end{proof}
\begin{rem}\label{Gi} Theorem \ref{lem2} extends the result in \cite[Theorem 5.2]{HJPS}, where $g^1$ and $g^2$ are assumed to be bounded processes.
\end{rem}
Throughout the paper, the letter $C$ or $c$ will denote a positive constant, and $C(\theta)$ or $c(\theta)$ stands for a constant depending on $\theta$. The value of the constants may change from one appearance to another.

\section{Harnack and Gradient Estimate}
Consider the following $G$-SDE on $\mathbb{R}^{m+d}$:
\beq\label{EH}
\begin{cases}
\d X_t=\{AX_t+MY_t\}\d t, \\
\d Y_t=b_1(X_t,Y_t)\d t+\d \<B\>_tb_2(X_t,Y_t)+Q\d B_t,
\end{cases}
\end{equation}
where $B_t$ is a $d$-dimensional $G$-Brownian motion defined in Section 1, $A$ is an $m\times m$ matrix, $M$ is an $m\times d$ matrix, $Q$ is a $d\times d$ matrix, $b_1, b_2:\mathbb{R}^{m+d}\to \mathbb{R}^d$.

 In this paper, we only consider $m=d=1$, and the result can be extended to general $m\geq 1$ and $d\geq 1$. In this  case, $\underline{\sigma}$ and $\overline{\sigma}$ in \eqref{G(A)} are two positive constants satisfying $\underline{\sigma}< \overline{\sigma}$, and the corresponding generating function is given by
 $$G(a)=\frac{1}{2}\overline{\sigma}^2a^+-\frac{1}{2}\underline{\sigma}^2 a^-,\ \ a\in \mathbb{R}^1.$$
In this section,  we study the Harnack inequalities and gradient estimate for \eqref{EH}. To this end,  we make the following assumptions:
\begin{enumerate}
\item[\bf{(A1)}]  $QM\neq 0$.
\item[\bf{(A2)}]  There exists $K>0$ such that
\beq\label{Z}\beg{split}
|b_1(z)-b_1(\bar{z})|+|b_2(z)-b_2(\bar{z})|\leq K|z-\bar{z}|,\ \ z,\bar{z}\in\mathbb{R}^{2}.
\end{split}\end{equation}
\end{enumerate}
\begin{rem}\label{non-ex}
According to \cite[Theorem 1.2]{peng4}, {\bf(A2)} implies that \eqref{EH} has a unique non-explosive strong solution $(X^z_t,Y^z_t)$ in $M_G^2([0,T])^2$ for any $T>0$ and  $(X_0$, $Y_0)=z\in\mathbb{R}^{2}$.
\end{rem}
Denote by $C_b(\mathbb{R}^2)$ ($C^+_b(\mathbb{R}^2)$) the bounded (non-negative bounded) and continuous function on $\mathbb{R}^2$. Let $\bar{P}_t$ be the associated nonlinear  semigroup to $(X^z_t,Y^z_t)$, i.e.
$$\bar{P}_tf(z)=\bar{\E}^Gf(X^z_t,Y^z_t), \ \ f\in C_b(\mathbb{R}^{2}).$$
For a real-valued functtion $f$ defined on a metric sapce $(H, \rho)$, define
\begin{align}\label{mod}
|\nabla f(z)|=\limsup_{\rho(x,z)\to 0}\frac{|f(x)-f(z)|}{\rho(x,z)}, \ \ z\in H.
\end{align}
Then $|\nabla f(z)|$ is called the local Lipschitz constant of $f$ at point $z\in H$.
\begin{thm}\label{T3.2} Assume
  {\bf (A1)}-{\bf (A2)} and let $T>0$. Then there exists some constant $C>0$ depending on $A$, $K$ and $|Q^{-1}|$ such that the following assertions hold.
\begin{enumerate}
\item[(1)] For any $z=(z_1, z_2), h=(h_1, h_2)\in \mathbb{R}^{2}$, $p>1$, the Harnack inequality
 \beg{equation}\label{H}
 \beg{split}
 (\bar{P}_Tf)^p(z+h)\le  &\bar{P}_Tf^p(z)
  \exp\bigg[C\ff{p}{2(p-1)} \Sigma(T)|h|^2\bigg],\ \ f\in C^+_b(\mathbb{R}^2)
  \end{split}\end{equation}
holds with 
\begin{align}\label{sigma}
\Sigma(T):=\underline{\sigma}^{-2}T\left(\ff 1 {T}+ \ff{1}{T^{2}}+1+ T\right)^2+\overline{\sigma}^{2}T\left(1+ T\right)^2.
\end{align}
\item[(2)]
The gradient estimate, i.e.
\begin{align}\label{ge}\|\nabla\bar{P}_T f\|_{\infty}\leq C\|f\|_\infty\sqrt{\Sigma(T)},\ \ f\in C^+_b(\mathbb{R}^2)
\end{align}
holds for some constant $C>0$.
\item[(3)] For any $p>1$, there exists a constant $c(p)>0$ such that
\begin{align}\label{gep}
|\nabla \bar{P}_T f(z)|
&\leq c(p)\left(\bar{P}_T|f|^p(z)\right)^{\frac{1}{p}}\sqrt{\Sigma(T)},\ \ z\in\mathbb{R}^2, \ \ f\in C^+_b(\mathbb{R}^2).
\end{align}
\end{enumerate}
 \end{thm}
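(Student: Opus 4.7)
All three assertions will be proved by coupling by change of measure, in the spirit of \cite{Wbook} but now using the $G$-Girsanov transform of Theorem \ref{lem2} to handle the degenerate, nonlinear setting. The plan is to build, for fixed $z,h\in\mathbb{R}^2$ and $T>0$, an auxiliary process $(\tilde X_t,\tilde Y_t)_{t\in[0,T]}$ starting from $z+h$, coupled to $(X_t^z,Y_t^z)$ so that $(\tilde X_T,\tilde Y_T)=(X_T^z,Y_T^z)$, while under a new sublinear expectation $\breve\E$ obtained from the Girsanov theorem, $(\tilde X,\tilde Y)$ has the $G$-distribution of $(X^{z+h},Y^{z+h})$. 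I would set $\tilde Y_t=Y_t^z+\xi(t)$ with $\xi$ a \emph{deterministic} scalar function, and define $\tilde X_t$ by $d\tilde X_t=(A\tilde X_t+M\tilde Y_t)\,dt$. Then $\eta_t:=\tilde X_t-X_t^z$ satisfies the linear ODE $\dot\eta=A\eta+M\xi$ with $\eta(0)=h_1$, so the three conditions $\xi(0)=h_2$, $\xi(T)=0$ and $\eta(T)=0$ can be achieved by taking $\xi$ to be an explicit quadratic polynomial in $t$ whose coefficients combine $h_1$ and $h_2$ with weights involving $T^{-1},T^{-2},1,T$; these weights are the source of the four terms in the factor $(T^{-1}+T^{-2}+1+T)$ appearing in $\Sigma(T)$.

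\textbf{Girsanov transform and density bound.} Matching the equations identifies the change of drift
\[
g^1_t=Q^{-1}\bigl\{\dot\xi(t)+b_1(X_t,Y_t)-b_1(\tilde X_t,\tilde Y_t)\bigr\},\qquad g^2_t=Q^{-1}\bigl\{b_2(X_t,Y_t)-b_2(\tilde X_t,\tilde Y_t)\bigr\},
\]
so that $\breve B:=B+\int_0^\cdot g^1_u\,du+\int_0^\cdot g^2_u\,d\langle B\rangle_u$ is the candidate new $G$-Brownian motion. Assumption \textbf{(A2)} together with the deterministic control of $|\eta_t|+|\xi(t)|$ ensures $g^1,g^2$ are deterministically bounded, so the condition \eqref{Nov} is trivially satisfied for every exponent, and Theorem \ref{lem2} yields $\breve\E[\cdot]=\bar\E^G[\breve R_T\,\cdot]$ with $\breve R_T$ as specified there; pathwise uniqueness (Remark \ref{non-ex}) gives $\bar P_T f(z+h)=\bar\E^G[\breve R_T f(X_T^z,Y_T^z)]$. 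Using the pointwise bounds $\underline\sigma^2\leq d\langle B\rangle_t/dt\leq\bar\sigma^2$ and $\bar\sigma^{-2}\leq d\langle B'\rangle_t/dt\leq\underline\sigma^{-2}$ from \eqref{B}, \eqref{B'}, the quadratic term in $\log\breve R_T$ is deterministically dominated by $C\Sigma(T)|h|^2$. Writing $\breve R_T^q=\breve R_T^{(q)}\exp\bigl[\tfrac{q^2-q}{2}\cdot(\text{quadratic term})\bigr]$, where $\breve R_T^{(q)}$ is the Theorem \ref{lem2} density obtained by replacing $g^i$ with $qg^i$, and using $\bar\E^G[\breve R_T^{(q)}]\leq 1$, I obtain
\[
\bar\E^G[\breve R_T^q]\leq\exp\bigl[Cq(q-1)\Sigma(T)|h|^2/2\bigr],\qquad q>1.
\]

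\textbf{Deriving the three conclusions.} For (1), use the coupling identity above and H\"older's inequality for the sublinear expectation with exponents $(p,p/(p-1))$, then substitute the moment bound at $q=p/(p-1)$ to recover \eqref{H}. For (2) and (3), subadditivity of $\bar\E^G$ yields
\[
\bar P_T f(z+h)-\bar P_T f(z)\leq \bar\E^G\bigl[(\breve R_T-1)f(X_T^z,Y_T^z)\bigr],
\]
to which I would apply $|f|\leq\|f\|_\infty$ for (2) and H\"older with exponents $(p,p/(p-1))$ for (3). A Taylor-type bound $\bar\E^G[|\breve R_T-1|^q]\leq c_q|h|^q\Sigma(T)^{q/2}$ for small $|h|$, derived from the exponential form of $\breve R_T$ together with the moment bound of the previous step (applied at a slightly larger exponent), then produces the local Lipschitz estimates in the prescribed form after passing to $|h|\to 0$. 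The principal obstacle is designing $\xi$ so that the three boundary/matching constraints are met and $\int_0^T|\dot\xi|^2\,ds$, $\int_0^T|\xi|^2\,ds$ simultaneously achieve the sharp $T$-asymptotics encoded in $\Sigma(T)$; a na\"ive linear ansatz would produce extra powers of $T^{-1}$ and fail in the short-time regime. A secondary subtlety is that in the $G$-expectation framework Novikov's criterion is not available directly, so the moment bound on $\breve R_T$ has to be routed through Theorem \ref{lem2} applied at a shifted exponent together with the a.s.\ bounds \eqref{B}--\eqref{B'} on the quadratic variations.
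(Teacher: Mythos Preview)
Your proposal is correct and follows essentially the same coupling-by-change-of-measure strategy as the paper: the paper's deterministic bridge $\gamma_1(s)=\frac{T-s}{T}h_2-\frac{s(T-s)}{T^2}Me^{-sA}\Lambda_1(T)^{-1}(\cdots)$ plays the role of your $\xi$, and the Girsanov density moment bound for part~(1) is obtained exactly as you indicate. Two small points of alignment: the Girsanov transform of Theorem~\ref{lem2} lives on the \emph{extended} space $(\hat\Omega,\hat\E^{\hat G})$ (since $\breve R_T$ involves $B'$), so your identity should read $\bar P_Tf(z+h)=\hat\E^{\hat G}[\breve R_T f(X_T^z,Y_T^z)]$ rather than $\bar\E^G[\cdots]$; and for parts~(2)--(3) the paper makes your ``Taylor-type bound'' precise via the elementary inequality $|x-1|\le(x+1)|\log x|$ followed by the BDG inequality applied to the martingale part of $\log\breve R_T$, which you may wish to adopt in place of the vaguer formulation.
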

 \begin{rem} With \eqref{rep} in hand, it seems that \eqref{H} can be derived by taking a supremum in the following Harnack inequality for the linear expectation $\E_P$:
 \begin{align} \label{lin}
 \left(\E_P f(X^{z+h}_t, Y^{z+h}_t)\right)^p\leq \left(\E_P f^p(X^{z}_t, Y^{z}_t)\right)\exp\{\Phi(t,h,p)\}.
 \end{align}
However, the method of coupling by change of measure is available for the  SDEs driven by Brownian motion. So, it is difficult to get \eqref{lin} since $B_t$ is only a martingale under $\E_P$. Therefore, the results in Theorem \ref{T3.2} are non-trivial.
 \end{rem}
 \begin{rem} Compared to the SDEs in \cite{song}, the SDE \eqref{EH} is allowed to contain drift $\d \<B\>_tb_2.$
 \end{rem}
Now, we are in the position to prove Theorem \ref{T3.2}.
\begin{proof}
\begin{enumerate}
  \item [(1)]
For any $\eta\in\mathbb{R}^{2}$, let $(X^\eta_t, Y^\eta_t)$ solve (\ref{EH})  with $(X_0,Y_0)= \eta$. For $h=(h_1,h_2)\in\mathbb{R}^{2}$, define
$$\gamma_1(s)= v_1(s) h_2+\alpha_1(s), \ \ s\in[0,T]$$
with
\begin{align*}
&v_1(s)= \ff{T-s}{T},\\
&\alpha_1(s)= -\ff{s(T-s)}{T^2}M\e^{-sA}\Lambda_1(T)^{-1} \bigg(h_1+\int_0^{T} \ff{T-u}{T} \e^{-uA}Mh_2\d u\bigg),\ \ s\in[0,T],
\end{align*}
where
$$\Lambda_1(T):= \int_0^T \ff{s(T-s)}{T^2}\e^{-2sA} M^2\d s.$$
It is clear that
\beq\label{QQ0} |\Lambda_1(T)^{-1}|\le cT^{-1}\end{equation} holds for some constant $c>0.$

Let $(\tilde{X}_t, \tilde{Y}_t)$ solve the equation
\beq\label{EC10} \beg{cases} \d\tilde{X}_t= \{A\tilde{X}_t+M\tilde{Y}_t\}\d t,\\
\d \tilde{Y}_t= b_1(X^z_t, Y^z_t)\d t +b_2(X^z_t, Y^z_t)\d \<B\>_t+Q\d B_t+ \gamma_1'(t)\d t\end{cases}\end{equation} with
$(\tilde{X}_0,\tilde{Y}_0)= z+h$.
Then the solution to (\ref{EC10}) is non-explosive as well.

Set
 $$\Theta_1(s)=\left(\e^{As}h_1+\int_0^{s} \e^{(s-u)A} M\gamma_1(u)\d u,\ \gamma_1(s)\right),\ \ s\in[0,T].$$
Then there exists a constant $C>0$ such that for any $s\in[0,T]$,
\beg{equation}\label{NN0}\beg{split}
&|\gamma_1'(s)|\le C\left(\ff 1 {T}+ \ff{1}{T^{2}}\right)|h|,\\
&|\Theta_1(s)|\le C\Big(1+ T\Big)|h|.\end{split}\end{equation}
Note that
\beq\label{EE} (\tilde{X}_s,\tilde{Y}_s)=(X^z_s,Y^z_s)+\Theta_1(s), \ \
s\in[0,T],\end{equation}
and in particular, $(\tilde{X}_T,\tilde{Y}_T)=(X^z_T,Y^z_T)$.
Let
\begin{align*}
&\Phi_1(s)=Q^{-1}\{b_1(X^z_s, Y^z_s)-b_1(\tilde{X}_s,
\tilde{Y}_s)+\gamma_1'(s)\},\\
&\Phi_2(s)=Q^{-1}\{b_2(X^z_s, Y^z_s)-b_2(\tilde{X}_s,
\tilde{Y}_s)\},\ \ s\in[0,T],
\end{align*}
and $B'$ be in Section 2.2.
\eqref{B},  \eqref{B'},  \eqref{NN0} and \eqref{EE} together with {\bf(A1)}-{\bf(A2)}  imply $\hat{\mathcal{C}}$-q.s.
\begin{equation}\begin{split}\label{Phi}
&\int_0^T|\Phi_1(s)|^2\d \langle B'\rangle_s+\int_0^T|\Phi_2(s)|^2\d \langle B\rangle_s+2\int_0^T\Phi_1(s)\Phi_2(s)\d s\\
&\leq \int_0^T\underline{\sigma}^{-2}|\Phi_1(s)|^2\d s+\int_0^T\overline{\sigma}^2|\Phi_2(s)|^2\d s+2\int_0^T\Phi_1(s)\Phi_2(s)\d s\\
&\leq 2\int_0^T\underline{\sigma}^{-2}|\Phi_1(s)|^2\d s+2\int_0^T\overline{\sigma}^2|\Phi_2(s)|^2\d s\\
&\leq C\underline{\sigma}^{-2}\int_0^T\left(|\Theta_1(s)|+|\gamma_1'(s)|\right)^2\d s+C\overline{\sigma}^{2}\int_0^T|\Theta_1(s)|^2\d s\\
&\leq C\underline{\sigma}^{-2}T\left(\ff 1 {T}+ \ff{1}{T^{2}}+1+ T\right)^2|h|^2+C\overline{\sigma}^{2}T\left(1+ T\right)^2|h|^2\\
&=C\Sigma(T)|h|^2
\end{split}\end{equation}
for some constant $C>0$ depending on $A$, $K$ in {\bf(A2)} and $|Q^{-1}|$.

Applying Theorem \ref{lem2},  we conclude that
$$
\tilde{B}:=B+\int_{0}^{\cdot}\Phi_1(u)\d u+\int_{0}^{\cdot}\Phi_2(u)\d \<B\>_u
$$
is a $G$-Brownian motion on $[0,T]$ under $\E_1(\cdot)=\hat{\E}^{\hat{G}}(R_1(T)(\cdot))$, where
\begin{align*}
R_1(T)&=\exp\bigg[-\int_0^T\left\< \left(
                                      \begin{array}{c}
                                        \Phi_1(u)\\
                                        \Phi_2(u) \\
                                      \end{array}
                                    \right)
, \d \left(
                                      \begin{array}{c}
                                        B'_u\\
                                        B_u \\
                                      \end{array}
                                    \right)\right\>\\
&\qquad\qquad-\frac{1}{2}\int_0^T \left(|\Phi_1(s)|^2\d \langle B'\rangle_s+|\Phi_2(s)|^2\d \langle B\rangle_s+2\Phi_1(s)\Phi_2(s)\d s\right)\bigg].
\end{align*}
Since $\<\tilde{B}\>=\<B\>$, (\ref{EC10}) reduces to
\beq\label{E2'0}   \beg{cases} \d \tilde{X}_t= \{A\tilde{X}_t+M\tilde{Y}_t\}\d t,\\
\d \tilde{Y}_t= b_1(\tilde{X}_t, \tilde{Y}_t)\d t+\d \<\tilde{B}\>_tb_2(\tilde{X}_t, \tilde{Y}_t) +Q\d \tilde{B}_t.
\end{cases}
\end{equation}
This means that the distribution of $(\tilde{X}_t,\tilde{Y}_t)$ under $\E_1$ coincides with that of $(X^{z+h}_t$, $Y^{z+h}_t)$  under $\hat{\E}^{\hat{G}}$ (or $\bar{\E}^G$).
Thus, H\"{o}lder inequality implies for any $f\in C_b^+(\mathbb{R}^2)$ and $p>1$,
\begin{align}\label{couple}
\bar{P}_T f(z+h)&=\E_1f(\tilde{X}_T,\tilde{Y}_T)\nonumber\\
&=\hat{\E}^{\hat{G}}\left[R_1(T)f(X^z_T,Y^z_T)\right]\\
&\leq (\bar{P}_T f^p(z))^{\frac{1}{p}}\{\hat{\E}^{\hat{G}} R_1(T)^{\frac{p}{p-1}}\}^{\frac{p-1}{p}}\nonumber,
\end{align}
here we used the fact that the distribution of $B$ under $\bar{\E}^G$ is equal to that of $B$ under $\hat{\E}^{\hat{G}}$.
It follows from the definition of $R_1(T)$ and  \eqref{Phi} that
\begin{equation*}\begin{split}\label{exp}
&\hat{\E}^{\hat{G}} R_1(T)^{\frac{p}{p-1}}\\
&= \hat{\E}^{\hat{G}}\Bigg\{\exp\bigg[-\frac{p}{p-1}\int_0^T\left\< \left(
                                      \begin{array}{c}
                                        \Phi_1(u)\\
                                        \Phi_2(u) \\
                                      \end{array}
                                    \right)
, \d \left(
                                      \begin{array}{c}
                                        B'_u\\
                                        B_u \\
                                      \end{array}
                                    \right)\right\>\\
&\qquad-\frac{1}{2}\frac{p^2}{(p-1)^2}\int_0^T \left(|\Phi_1(s)|^2\d \langle B'\rangle_s+|\Phi_2(s)|^2\d \langle B\rangle_s+2\Phi_1(s)\Phi_2(s)\d s\right)\bigg]\\
&\qquad\times\exp\bigg[\frac{p}{2(p-1)^2}\int_0^T \left(|\Phi_1(s)|^2\d \langle B'\rangle_s+|\Phi_2(s)|^2\d \langle B\rangle_s+2\Phi_1(s)\Phi_2(s)\d s\right)\Bigg\}\\
&\leq \exp\bigg[C\frac{p}{2(p-1)^2}\Sigma(T)|h|^2\bigg].
\end{split}\end{equation*}
Combining this with \eqref{couple}, we derive the Harnack inequality \eqref{H}.
 \item[(2)]
Now, we prove the gradient estimate \eqref{ge}. Since the distribution of $B$ under $\bar{\E}^G$ is equal to that of $B$ under $\hat{\E}^{\hat{G}}$, we have
$$\bar{P}_T f(z)=\bar{\E}^Gf(X^z_T,Y^z_T)=\hat{\E}^{\hat{G}}f(X^z_T,Y^z_T).$$
This and \eqref{couple} yield
\begin{equation}\begin{split}\label{PP}
|\bar{P}_T f(z+h)-\bar{P}_T f(z)|&=|\hat{\E}^{\hat{G}}\left[R_1(T)f(X^z_T,Y^z_T)\right]-\hat{\E}^{\hat{G}}f(X^z_T,Y^z_T)|\\
&\leq \hat{\E}^{\hat{G}}\left(|f(X^z_T,Y^z_T)||R_1(T)-1|\right).
\end{split}\end{equation}
Noting that $|x-1|\leq (x+1)|\log x|$ for any $x>0$, we have
\begin{equation}\begin{split}\label{pp0}
&|\bar{P}_T f(z+h)-\bar{P}_T f(z)|\\
&\leq \|f\|_{\infty}\hat{\E}^{\hat{G}}R_1(T)|\log R_1(T)|+\|f\|_{\infty}\hat{\E}^{\hat{G}}|\log R_1(T)|\\
&= \|f\|_{\infty}\left(\E_1|\log R_1(T)|+\hat{\E}^{\hat{G}}|\log R_1(T)|\right).
\end{split}\end{equation}
Let
$$
\tilde{B'}=B'+\int_{0}^{\cdot}\Phi_1(u)\d \langle B'\rangle_u+\int_{0}^{\cdot}\Phi_2(u)\d u.
$$
From Theorem \ref{lem2}, we know that $\tilde{B'}$ is a $\tilde{G}$-Brownian motion under $\E_1$.
Applying B-D-G inequality and noting $\<\tilde{B'}\>=\<B'\>$ and $\<\tilde{B}\>=\<B\>$, we obtain
\begin{align*}
&\E_1|\log R_1(T)|\\
&=\E_1\bigg|-\int_0^T\left\< \left(
                                      \begin{array}{c}
                                        \Phi_1(u)\\
                                        \Phi_2(u) \\
                                      \end{array}
                                    \right)
, \d \left(
                                      \begin{array}{c}
                                        B'_u\\
                                        B_u \\
                                      \end{array}
                                    \right)\right\>\\
&-\frac{1}{2}\int_0^T \left(|\Phi_1(s)|^2\d \langle B'\rangle_s+|\Phi_2(s)|^2\d \langle B\rangle_s+2\Phi_1(s)\Phi_2(s)\d s\right)\bigg|\\
&\leq \E_1\Bigg|\int_0^T\left\< \left(
                                      \begin{array}{c}
                                        \Phi_1(u)\\
                                        \Phi_2(u) \\
                                      \end{array}
                                    \right)
, \d \left(
                                      \begin{array}{c}
                                        \tilde{B}'_u\\
                                        \tilde{B}_u \\
                                      \end{array}
                                    \right)\right\>\Bigg|\\
&+\frac{1}{2}\E_1\Bigg|\int_0^T \left(|\Phi_1(s)|^2\d \langle \tilde{B'}\rangle_s+|\Phi_2(s)|^2\d \langle \tilde{B}\rangle_s+2\Phi_1(s)\Phi_2(s)\d s\right)\Bigg|\\
&\leq \E_1\left(\int_0^T \left(|\Phi_1(s)|^2\d \langle \tilde{B'}\rangle_s+|\Phi_2(s)|^2\d \langle \tilde{B}\rangle_s+2\Phi_1(s)\Phi_2(s)\d s\right)\right)^{\frac{1}{2}}\\
&+\frac{1}{2}\E_1\Bigg|\int_0^T \left(|\Phi_1(s)|^2\d \langle \tilde{B'}\rangle_s+|\Phi_2(s)|^2\d \langle \tilde{B}\rangle_s+2\Phi_1(s)\Phi_2(s)\d s\right)\Bigg|.
\end{align*}
This together with \eqref{Phi} implies
\begin{align*}
\E_1|\log R_1(T)|&\leq C\left(\Sigma(T)|h|^2+\sqrt{\Sigma(T)}|h|\right),
\end{align*}
here, $\Sigma(T)$ is defined in \eqref{sigma}.
Similarly, we get
\begin{align*}
\hat{\E}^{\hat{G}}|\log R_1(T)|&\leq C\left(\Sigma(T)|h|^2+\sqrt{\Sigma(T)}|h|\right).
\end{align*}
Then it follows from \eqref{pp0} that
\begin{align}\label{ge0}|\bar{P}_T f(z+h)-\bar{P}_T f(z)|\leq C\|f\|_{\infty}\left(\Sigma(T)|h|^2+\sqrt{\Sigma(T)}|h|\right).
\end{align}
This together with \eqref{mod} yields
\begin{align}\label{ge1}|\nabla\bar{P}_T f(z)|\leq C\|f\|_\infty\sqrt{\Sigma(T)},
\end{align}
which implies \eqref{ge}.
\item[(3)]
In order to get \eqref{gep}, let
\begin{align*}\tilde{R}_1(T)&=\exp\bigg[-\frac{p}{p-1}\int_0^T\left\< \left(
                                      \begin{array}{c}
                                        \Phi_1(u)\\
                                        \Phi_2(u) \\
                                      \end{array}
                                    \right)
, \d \left(
                                      \begin{array}{c}
                                        B'_u\\
                                        B_u \\
                                      \end{array}
                                    \right)\right\>\\
&\quad-\frac{1}{2}\frac{p^2}{(p-1)^2} \int_0^T \left(|\Phi_1(s)|^2\d \langle B'\rangle_s+|\Phi_2(s)|^2\d \langle B\rangle_s+2\Phi_1(s)\Phi_2(s)\d s\right)\bigg],
\end{align*}
and
\begin{equation}\begin{split}\label{BB}
&\hat{B'}=B'+\int_{0}^{\cdot}\frac{p}{p-1}\Phi_1(u)\d \langle B'\rangle_u+\int_{0}^{\cdot}\frac{p}{p-1}\Phi_2(u)\d u,\\
&\hat{B}=B+\int_{0}^{\cdot}\frac{p}{p-1}\Phi_1(u)\d u+\int_{0}^{\cdot}\frac{p}{p-1}\Phi_2(u)\d \langle B\rangle_u.
\end{split}\end{equation}
Again by Theorem \ref{lem2}, $\hat{B'}$ is also a $\tilde{G}$-Brownian motion under
$\tilde{\E}_2(\cdot)=\hat{\E}^{\hat{G}}(\tilde{R}_1(T)(\cdot))$.
Using the inequality $|x-1|\leq (x+1)|\log x|$ for any $x>0$ and \eqref{Phi}, we have
\begin{equation}\begin{split}\label{RR}
&\hat{\E}^{\hat{G}}|R_1(T)-1|^{\frac{p}{p-1}}\\
&\leq \hat{\E}^{\hat{G}}|R_1(T)+1|^{\frac{p}{p-1}}|\log R_1(T)|^{\frac{p}{p-1}}\\
&\leq c(p)\hat{\E}^{\hat{G}}R_1(T)^{\frac{p}{p-1}}|\log R_1(T)|^{\frac{p}{p-1}}+c(p)\hat{\E}^{\hat{G}}|\log R_1(T)|^{\frac{p}{p-1}}\\
&\leq c(p)\exp\bigg[C\frac{p}{2(p-1)^2}\Sigma(T)|h|^2\bigg] \hat{\E}^{\hat{G}}\left(\tilde{R}_1(T)|\log R_1(T)|^{\frac{p}{p-1}}\right)\\
&\qquad+c(p)\hat{\E}^{\hat{G}}|\log R_1(T)|^{\frac{p}{p-1}}
\end{split}\end{equation}
for some constants $C, c(p)>0$.
Combining \eqref{BB} and B-D-G inequality and noting $\<\hat{B'}\>=\<B'\>$ and $\<\hat{B}\>=\<B\>$, we obtain
\begin{align*}
&\hat{\E}^{\hat{G}}\left(\tilde{R}_1(T)|\log R_1(T)|^{\frac{p}{p-1}}\right)\\
&= \tilde{\E}_2\bigg|-\int_0^T\left\< \left(
                                      \begin{array}{c}
                                        \Phi_1(u)\\
                                        \Phi_2(u) \\
                                      \end{array}
                                    \right)
, \d \left(
                                      \begin{array}{c}
                                        B'_u\\
                                        B_u \\
                                      \end{array}
                                    \right)\right\>\\
&\qquad\qquad-\frac{1}{2}\int_0^T\left(|\Phi_1(s)|^2\d \langle B'\rangle_s+|\Phi_2(s)|^2\d \langle B\rangle_s+2\Phi_1(s)\Phi_2(s)\d s\right)\bigg|^{\frac{p}{p-1}}\\
&= \tilde{\E}_2\bigg|-\int_0^T\left\< \left(
                                      \begin{array}{c}
                                        \Phi_1(u)\\
                                        \Phi_2(u) \\
                                      \end{array}
                                    \right)
, \d \left(
                                      \begin{array}{c}
                                        \hat{B'}_u\\
                                        \hat{B}_u \\
                                      \end{array}
                                    \right)\right\>\\
&\qquad+\left(\frac{p}{p-1}-\frac{1}{2}\right)\int_0^T\left(|\Phi_1(s)|^2\d \langle \hat{B'}\rangle_s+|\Phi_2(s)|^2\d \langle \hat{B}\rangle_s+2\Phi_1(s)\Phi_2(s)\d s\right)\bigg|^{\frac{p}{p-1}}\\
&\leq c(p)\tilde{\E}_2\left|\int_0^T\left\< \left(
                                      \begin{array}{c}
                                        \Phi_1(u)\\
                                        \Phi_2(u) \\
                                      \end{array}
                                    \right)
, \d \left(
                                      \begin{array}{c}
                                        \hat{B}'_u\\
                                       \hat{ B}_u \\
                                      \end{array}
                                    \right)\right\>\right|^{\frac{p}{p-1}}\\
&+c(p)\tilde{\E}_2\left|\left(\frac{p}{p-1}-\frac{1}{2}\right)\int_0^T\left(|\Phi_1(s)|^2\d \langle \hat{B'}\rangle_s+|\Phi_2(s)|^2\d \langle \hat{B}\rangle_s+2\Phi_1(s)\Phi_2(s)\d s\right)\right|^{\frac{p}{p-1}}\\
&\leq c(p)\tilde{\E}_2\left|\int_0^T\left(|\Phi_1(s)|^2\d \langle \hat{B'}\rangle_s+|\Phi_2(s)|^2\d \langle \hat{B}\rangle_s+2\Phi_1(s)\Phi_2(s)\d s\right)\right|^{\frac{p}{2(p-1)}}\\
&+c(p)\tilde{\E}_2\left|\left(\frac{p}{p-1}-\frac{1}{2}\right)\int_0^T\left(|\Phi_1(s)|^2\d \langle \hat{B'}\rangle_s+|\Phi_2(s)|^2\d \langle \hat{B}\rangle_s+2\Phi_1(s)\Phi_2(s)\d s\right)\right|^{\frac{p}{p-1}}\\
&=:I_1+I_2.
\end{align*}
Let $\tilde{\mathcal{C}}$ be the Choquet capacity associated to $\tilde{\E}_2$. Noting that $\hat{B'}$ is a $\tilde{G}$-Brownian motion under $\tilde{\E}_2$, then $\tilde{\mathcal{C}}$-q.s. \eqref{Phi} holds with $(B,B')$ replacing by $(\hat{B},\hat{B'})$. Thus, we get
$$I_1\leq c(p)\left(\Sigma(T)|h|^2\right)^{\frac{p}{2(p-1)}},$$
and
$$I_2\leq c(p)\left(\Sigma(T)|h|^2\right)^{\frac{p}{p-1}}.$$
Therefore, we have
\begin{align}\label{RR0}
&\left(\hat{\E}^{\hat{G}}\left(\tilde{R}_1(T)|\log R_1(T)|^{\frac{p}{p-1}}\right)\right)^{\frac{p-1}{p}}\leq c(p)\left(\Sigma(T)|h|^2+\sqrt{\Sigma(T)}|h|\right).
\end{align}
Similarly by B-D-G inequality and \eqref{Phi}, we arrive at
\begin{align*}
&\left(\hat{\E}^{\hat{G}}|\log R_1(T)|^{\frac{p}{p-1}}\right)^{\frac{p-1}{p}}
\leq c(p)\left(\Sigma(T)|h|^2+\sqrt{\Sigma(T)}|h|\right).
\end{align*}
This together with \eqref{PP}, \eqref{RR}, \eqref{RR0} and H\"{o}lder inequality yields
\begin{align*}&|\nabla \bar{P}_T f(z)|=\limsup_{h\to0}\frac{|\bar{P}_T f(z+h)-\bar{P}_T f(z)|}{|h|}\\
&\leq \left(\bar{P}_T|f|^p(z)\right)^{\frac{1}{p}}\limsup_{h\to0}\frac{\left(\hat{\E}^{\hat{G}}|R_1(T)-1|^\frac{p}{p-1} \right)^{\frac{p-1}{p}}}{|h|}\\
&\leq c(p)\left(\bar{P}_T|f|^p(z)\right)^{\frac{1}{p}}\sqrt{\Sigma(T)},\ \ z\in\mathbb{R}^2.
\end{align*}
This completes the proof.
\end{enumerate}
\end{proof}
\section{Applications of Harnack inequality}
As an application of Harnack inequality, in this section, we will prove the  weak existence of SDEs perturbed by an integrable  drifts with respect to an invariant nonlinear expectation of a regular $G$-SDE.  To this end, we assume that the Harnack inequality holds for the regular $G$-SDE. The main idea is to prove Novikov's condition by Harnack inequality. We should point out that the following procedure can also be applied for non-degenerate $G$-SDEs. However, to make the framework consistent, we only consider the stochastic Hamiltonian system. One can refer to \cite{WP} and \cite{WA} for the linear expectation case.
Let $A,M,Q,b_1,b_2$ and $B_t$ be introduced in Section 3 and $\bar{b}_1, \bar{b}_2:\R^{m+d}\to\R^d$.
For simplicity, we still consider $m=d=1$.
Consider the  stochastic Hamiltonian system:
 \beq\label{EH1}
\begin{cases}
\d X_t=\{AX_t+MY_t\}\d t, \\
\d Y_t=\bar{b}_1(X_t,Y_t)\d t+\bar{b}_2(X_t,Y_t)\d \<B\>_t\\
\qquad+b_1(X_t,Y_t)\d t+b_2(X_t,Y_t)\d \<B\>_t+Q\d B_t.
\end{cases}
\end{equation}
The referenced SDE is
 \beq\label{EH0}
\begin{cases}
\d X_t=\{AX_t+MY_t\}\d t, \\
\d Y_t=b_1(X_t,Y_t)\d t+b_2(X_t,Y_t)\d \<B\>_t+Q\d B_t.
\end{cases}
\end{equation}
Assume that \eqref{EH0} has a unique non-explosive strong solution $(X^z_t,Y^z_t)$ in $M_G^2([0,T])^2$ for any $T>0$ and  $(X_0$, $Y_0)=z\in\mathbb{R}^{2}$. Let $P_t^0$ be the associated nonlinear semigroup to \eqref{EH0} defined by
$$P^0_tf(z)=\bar{\E}^Gf(X^z_t,Y^z_t), \ \ f\in C_b(\mathbb{R}^{2}).$$
Before moving on, we first  introduce the definition of weak solution for the SDE \eqref{EH1}.
\begin{defn} $((\tilde{X}, \tilde{Y}),\tilde{B})$ is called a weak solution to \eqref{EH1} with initial value $(x,y)$, if $\tilde{B}$ is a $G$-Brownian motion on some nonlinear space $(\Omega, \tilde{\E})$ and
\beq\label{ws}
\begin{cases}
  \tilde{X}_s=x+\int_0^s\{A\tilde{X}_t+M\tilde{Y}_t\}\d t, \\
 \tilde{Y}_s=y+\int_0^s\bar{b}_1(\tilde{X}_t,\tilde{Y}_t)\d t+\int_0^s\bar{b}_2(\tilde{X}_t,\tilde{Y}_t)\d \<\tilde{B}\>_t\\
\qquad+\int_0^sb_1(\tilde{X}_t,\tilde{Y}_t)\d s+\int_0^sb_2(\tilde{X}_t,\tilde{Y}_t)\d \<\tilde{B}\>_t+Q\d \tilde{B}_t,\ \ s\geq 0.
\end{cases}
\end{equation}
\end{defn}
In the following, we recall the definition of invariant nonlinear expectation, see \cite{HLWZ} for more details.
\begin{defn}\label{inv}
A sublinear expectation $\E_0:C^1_{b}(\R^d)\to\R^1$ is said to be an invariant expectation of $P_t^0$, if
$$\E_0(P_t^0 f)=\E_0 f,\ \ f\in C^1_{b}(\R^d), t\geq 0.$$
\end{defn}
\begin{thm}\label{we} Assume that $P_t^0$ has a unique invariant nonlinear expectation and satisfies the Harnack inequality:
\begin{align}\label{HI}
(P_t^0|f|)^p(z)\leq (P_t^0|f|^p)(\bar{z})\e^{\Phi_p(t,z,\bar{z})},\ \ f\in C_b(\R^2), z,\bar{z}\in\R^2,t>0
\end{align}  with
\begin{align}\label{E0p}
\int_0^t\frac{\d s}{\left\{\E_0\e^{-\Phi_p(s,z,\cdot)}\right\}^{\frac{1}{p}}}<\infty, \ \ t>0, z\in\R^2
\end{align}
for each $p>1$.
If $\bar{b}_1$ and $\bar{b}_2$ are continuous and there exists a constant $\varepsilon>0$ such that $$\E_0\e^{\varepsilon(|\bar{b}_1|^2+|\bar{b}_2|^2)}<\infty.$$ Then for any $z\in\R^2$, the stochastic Hamiltonian system  \eqref{EH1} has a weak solution with initial value $z$.
\end{thm}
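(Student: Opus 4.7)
My plan is to construct the weak solution by running the Girsanov transform of Theorem \ref{lem2} against the reference equation \eqref{EH0}. Let $(X^z_t,Y^z_t)$ denote the strong solution of \eqref{EH0} under $\bar{\E}^G$ with $(X_0,Y_0)=z$, and set
$$g^1_s := -Q^{-1}\bar{b}_1(X^z_s, Y^z_s), \qquad g^2_s := -Q^{-1}\bar{b}_2(X^z_s, Y^z_s).$$
If the Novikov-type condition \eqref{Nov} holds for this $(g^1,g^2)$ on $[0,T]$, Theorem \ref{lem2} yields a $G$-Brownian motion $\breve{B}$ on $[0,T]$ under $\breve{\E}$. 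Since $\<\breve{B}\>=\<B\>$ and $Q\d B_t=Q\d\breve{B}_t+\bar{b}_1(X^z_t,Y^z_t)\d t+\bar{b}_2(X^z_t,Y^z_t)\d\<\breve{B}\>_t$, substituting into \eqref{EH0} shows that $((X^z,Y^z),\breve{B})$ satisfies \eqref{ws} on $[0,T]$, i.e., is a weak solution to \eqref{EH1}.

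The proof thus reduces to verifying \eqref{Nov}. Using \eqref{B}, \eqref{B'}, the elementary inequality $2|g^1g^2|\leq (g^1)^2+(g^2)^2$, and the boundedness of $Q^{-1}$, the exponent in \eqref{Nov} is dominated by a constant multiple of $\int_0^T F(X^z_s,Y^z_s)\d s$ with $F:=|\bar{b}_1|^2+|\bar{b}_2|^2$. Since $B$ has the same distribution under $\hat{\E}^{\hat{G}}$ and $\bar{\E}^G$, it suffices to establish
$$\bar{\E}^G\exp\!\left\{\kappa\int_0^T F(X^z_s,Y^z_s)\d s\right\}<\infty$$
for a suitable $\kappa>0$.

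The key estimate couples Jensen's inequality in time with the Harnack inequality and the invariance of $\E_0$. First, Jensen yields
$$\exp\!\left\{\kappa\int_0^t F(X^z_s,Y^z_s)\d s\right\}\leq \frac{1}{t}\int_0^t \exp\!\{\kappa t\,F(X^z_s,Y^z_s)\}\d s,$$
and monotonicity and sub-additivity of $\bar{\E}^G$ bound the expectation by $t^{-1}\int_0^t P^0_s(\e^{\kappa t F})(z)\d s$. Next, rearranging \eqref{HI} gives $(P^0_s f)^p(z)\e^{-\Phi_p(s,z,\bar{z})}\leq P^0_s f^p(\bar{z})$; applying $\E_0$ in $\bar{z}$ and exploiting positive homogeneity (the left-hand factor is constant in $\bar{z}$) together with $\E_0 P^0_s f^p=\E_0 f^p$ yields
$$P^0_s f(z)\leq (\E_0 f^p)^{1/p}\bigl(\E_0 \e^{-\Phi_p(s,z,\cdot)}\bigr)^{-1/p}.$$
Taking $f=\e^{\kappa t F}$ with $p>1$ close to $1$ and $t$ small enough that $p\kappa t\leq\varepsilon$, the hypothesis $\E_0\e^{\varepsilon F}<\infty$ handles the numerator while \eqref{E0p} makes the $s$-integral of the denominator finite. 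This establishes \eqref{Nov} on some small interval $[0,t_0]$.

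The main obstacle is promoting this one-shot bound to arbitrary $T>0$, since the constraint $p\kappa t\leq\varepsilon$ forces the window to be short. My strategy is a Khasminskii-type iteration: partition $[0,T]$ into subintervals of length below $t_0$, build the density piece by piece using the above estimate starting from the terminal value of the previous piece, and multiply. The delicate point is that, unlike in the linear setting, no probability-measure Markov property is available; the iteration must be closed using only monotonicity, sub-additivity, and the time-homogeneity of \eqref{EH0}, after which Theorem \ref{lem2} delivers a weak solution on each $[0,T]$ and the consistent family is pasted to give a solution on $[0,\infty)$.
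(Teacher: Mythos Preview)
Your core argument coincides with the paper's: run Girsanov (Theorem \ref{lem2}) against the reference equation \eqref{EH0}, reduce to the Novikov condition, bound the quadratic-variation terms via \eqref{B}--\eqref{B'}, and then combine Jensen in time with the Harnack inequality \eqref{HI} and the invariance $\E_0 P^0_s f^p=\E_0 f^p$ to obtain
$P^0_s f(z)\le(\E_0 f^p)^{1/p}\bigl(\E_0\e^{-\Phi_p(s,z,\cdot)}\bigr)^{-1/p}$, which with $f=\e^{\kappa t F}$ and $p\kappa t\le\varepsilon$ yields finiteness on a short interval. This is exactly what the paper does.

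Where you diverge is in the extension to all $T>0$. The paper does \emph{not} attempt to upgrade the Novikov bound to arbitrary horizons via a Khasminskii iteration. Instead it observes that the argument produces a weak solution on $[0,t_0]$ for a \emph{fixed} $t_0>0$ depending only on $\varepsilon,p,\underline{\sigma},\overline{\sigma}$ (and not on the initial point), and then simply invokes the Markov property of the $G$-SDE to restart the construction from $(X_{t_0},Y_{t_0})$ and iterate. Your Khasminskii route would also work in principle, but---as you yourself note---closing the iteration still requires exactly the same Markov/conditioning ingredient you were trying to avoid, so it buys no extra rigor while adding complexity. The paper's shortcut (solve once on $[0,t_0]$, then restart) is the more economical packaging of the same idea.
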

\begin{proof} Let $(X_t,Y_t)$ be the solution to \eqref{EH0} with initial value $z$. Define
$$\bar{B}_s=B_s-\int_0^sQ^{-1}[(\bar{b}_1(X_t,Y_t)\d t+\bar{b}_2(X_t,Y_t)\d \<B\>_t)].$$
Then \eqref{EH0} can be rewritten as
 \beq\label{EH2}
\begin{cases}
\d X_t=\{AX_t+MY_t\}\d t, \\
\d Y_t=\bar{b}_1(X_t,Y_t)\d t+\bar{b}_2(X_t,Y_t)\d \<B\>_t\\
\qquad+b_1(X_t,Y_t)\d t+b_2(X_t,Y_t)\d \<B\>_t+Q\d \bar{B}_t.
\end{cases}
\end{equation}
By the Markov property, it is sufficient to find out a constant $t_0>0$ such that
$\{\bar{B}\}_{s\in[0,t_0]}$ is a $G$-Brownian motion under
$\tilde{\E}[\cdot]=\hat{\E}^{\hat{G}}[\tilde{R}(t_0)(\cdot)]$, where
\begin{align*}
\tilde{R}(t_0)=&\exp\bigg[\int_0^{t_0}\left\< \left(
                                                \begin{array}{c}
                                                  \bar{b}_1(X_u,Y_u) \\
                                                  \bar{b}_2(X_u,Y_u) \\
                                                \end{array}
                                              \right)
, \d \left(
       \begin{array}{c}
         B'_u \\
         B_u \\
       \end{array}
     \right)\right\>\\
&\quad-\frac{1}{2}\int_0^{t_0} \left(|\bar{b}_1(X_u,Y_u)|^2\d \langle B'\rangle_u+|\bar{b}_2(X_u,Y_u)|^2\d \langle B\rangle_u+2\bar{b}_1(X_u,Y_u)\bar{b}_2(X_u,Y_u)\d u\right)\bigg].
\end{align*}
According to Theorem \ref{lem2}, we only need to prove
\begin{align*}
&\hat{\E}^{\hat{G}}\exp\bigg\{\left(\frac{1}{2}+\delta\right)\bigg(\int_0^{t_0}\left(|\bar{b}_1(X_t,Y_t)|^2\d \langle B'\rangle_t+|\bar{b}_2(X_t,Y_t)|^2\d \langle B\rangle_t+2(\bar{b}_1\bar{b}_2)(X_t,Y_t)\d t\right)\bigg)\bigg\}\\
&\leq\hat{\E}^{\hat{G}}\exp\left\{(1+2\delta)\left(\int_0^{t_0}\underline{\sigma}^{-2}|\bar{b}_1(X_t,Y_t)|^2\d t+\int_0^{t_0}\overline{\sigma}^2|\bar{b}_2(X_t,Y_t)|^2\d t\right)\right\}\\
&\leq\bar{\E}^{G}\exp\left\{(1+2\delta)(\underline{\sigma}^{-2}+\overline{\sigma}^2) \int_0^{t_0}\left(|\bar{b}_1(X_t,Y_t)|^2+|\bar{b}_2(X_t,Y_t)|^2\right)\d t\right\}<\infty
\end{align*}
for some $\delta, t_0>0$. 
Firstly, the Harnack inequality \eqref{HI} implies
\begin{align*}
\E_0\e^{-\Phi_p(t,z,\cdot)}\left(P_t^0\e^{\frac{\varepsilon(|\bar{b}_1|^2+|\bar{b}_2|^2)}{p}}\right)^p(z)\leq \E_0\e^{\varepsilon(|\bar{b}_1|^2+|\bar{b}_2|^2)}.
\end{align*}
So for any $s\in(0,1)$ and $\lambda_s=\frac{\varepsilon}{ps}$, by Jensen's inequality and \eqref{E0p}, we arrive at
\begin{align*}
&\bar{\E}^{G}\exp\left\{\lambda_s \int_0^{s}\left(|\bar{b}_1(X_t,Y_t)|^2+|\bar{b}_2(X_t,Y_t)|^2\right)\d t\right\}\\
&\leq \frac{1}{s}\int_0^{s}\bar{\E}^{G}\exp\left\{\frac{\varepsilon}{p} \left(|\bar{b}_1(X_t,Y_t)|^2+|\bar{b}_2(X_t,Y_t)|^2\right)\right\}\d t\\
&=\frac{1}{s}\int_0^{s}P_t^0\e^{\frac{\varepsilon(|\bar{b}_1|^2+|\bar{b}_2|^2)}{p}}(z)\d t\\
&\leq \frac{1}{s}\int_0^{s}\frac{\d t}{\left\{\E_0\e^{-\Phi_p(t,z,\cdot)}\right\}^{\frac{1}{p}}} \left(\E_0\e^{\varepsilon(|\bar{b}_1|^2+|\bar{b}_2|^2)}\right)^{\frac{1}{p}}<\infty, \ \ z\in\R^2.
\end{align*}
Thus, taking $t_0$ satisfying $\frac{\varepsilon}{pt_0}>(\underline{\sigma}^{-2}+\overline{\sigma}^2)$ and $\delta=\frac{\frac{\varepsilon}{pt_0}}{2(\underline{\sigma}^{-2}+\overline{\sigma}^2)}-\frac{1}{2}$, the proof is completed.
\end{proof}
Next, we give an example in which \eqref{HI} and \eqref{E0p} hold.
\begin{exa}  In \eqref{EH1}, let $A=0$, $M=Q=1$, $b_2=0$ and $b_1(x,y)=-x-y$. Then \eqref{EH0} reduces to
\beq\label{EH3}
\begin{cases}
\d X_t=Y_t\d t, \\
\d Y_t=(-X_t-Y_t)\d t+\d B_t.
\end{cases}
\end{equation}
Firstly, by Theorem \ref{T3.2} (1), \eqref{HI} holds for
\begin{align*}
&\Phi_p(T,z,\bar{z})=\left[C\frac{p}{p-1}\left(\underline{\sigma}^{-2}T\left(\ff 1 {T}+ \ff{1}{T^{2}}+1+ T\right)^2+\overline{\sigma}^{2}T\left(1+ T\right)^2\right)\right]|z-\bar{z}|^2\\
&=c\frac{|z-\bar{z}|^2}{T^3}, \ \ T\in(0,1)
\end{align*}
for some constant $c>0$.

Next, by \cite[Theorem 3.12]{HLWZ}, \eqref{EH3} has a unique invariant nonlinear expectation $\E_0$. Let $\theta^0_s=\underline{\sigma},s\geq 0$ and $\P_{\theta^0}$ be the corresponding probability
as represented in \eqref{rep2}.  Then
\begin{equation}\label{jixian}
\bar{\E}^{G}f(X_t,Y_t)\geq \E_{\P_{\theta^0}}f(X_t,Y_t),\ \ f\in C_b^1(\R^2).
\end{equation}
On the other hand, by \cite[Theorem 3.1(1)]{W2}, under the probability $\P_{\theta^0}$, \eqref{EH3} has a unique invariant measure $\mu_0$:
$$\mu_0(\d x,\d y)=\frac{1}{2\pi\underline{\sigma}^2}\e^{-\frac{|x|^2+|y|^2}{2\underline{\sigma}^2}}\d x\d y.$$
By \cite[Theorem 3.3, Theorem 3.12]{HLWZ},  letting $t$ go to infinity in \eqref{jixian}, we arrive at
$$\E_0f\geq \mu_0(f), \ \ f\in C_b^1(\R^2).$$
Thus, according to \cite[Example 4.3]{WP}, we have
$$\E_0\left(\e^{-\Phi_p(t,z,\cdot)}\right)\geq \mu_0\left(\e^{-\Phi_p(t,z,\cdot)}\right)\geq \e ^{-c}\mu_0(B(z,1\wedge t^{\frac{3}{2}}))\geq \alpha(z)(1\wedge t)^{\frac{3}{2}},\ \ t>0, z\in\R^2$$
for some $\alpha\in C(\R^2)$. Thus, \eqref{E0p} holds for $p>\frac{3}{2}$.
\end{exa}

\beg{thebibliography}{99}

\bibitem{15} L. Denis, M. Hu, S. Peng, \emph{Function spaces and capacity related to a sublinear expectation: application to $G$-Brownian motion pathes}, Potential Anal. 34(2011), 139-161.

\bibitem{Denis} L. Denis and C. Martini, \emph{A theorectical framework for the pricing of contingent claims in the
presence of model uncertainty}, Ann. Appl. Probab. 16(2006), 827-852.


\bibitem{GW} A. Guillin, F.-Y. Wang, \emph{Degenerate Fokker-Planck equations: Bismut formula, gradient estimate and Harnack inequality,} J. Differential Equations 253(2012), 20-40.


\bibitem{HJ} M. Hu, S. Ji, \emph{Stochastic maximum principle for stochastic recursive optimal control problem under volatility ambiguity}, SIAM J. Control Optim. 54(2016), 918-945.

\bibitem{HJ2}  M. Hu, S. Ji, \emph{Dynamic programming principle for stochastic recursive optimal control problem under $G$-framework,} Stochastic Process. Appl.  127(2017), 107-134.

\bibitem{HJPS} M. Hu, S. Ji, S. Peng, Y. Song, \emph{Comparison theorem, Feynman-Kac formula and Girsanov transformation for BSDEs driven by $G$-Brownian motion,} Stochastic Process. Appl. 124(2014), 1170-1195.

\bibitem{HLWZ} M. Hu, H. Li, F. Wang, G. Zheng, \emph{Invariant and ergodic nonlinear
expectations for $G$-diffusion processes,} Electron. Commun. Probab. 20(2015), 1-15.
\bibitem{HP} M. Hu, S. Peng, \emph{On representation theorem of $G$-expectations and paths of $G$-Brownian motion,} Acta Math. Appl. Sin. Engl. Ser. 25(2009), 539-546.

\bibitem{O} E. Osuka, \emph{Girsanov's formula for $G$-Brownian motion,} Stochastic Process. Appl. 123(2013), 1301-1318.

\bibitem{peng2} S. Peng, \emph{$G$-Brownian motion and dynamic risk measures under volatility
uncertainty,} arXiv: 0711.2834.

\bibitem{peng1} S. Peng, \emph{$G$-expectation, $G$-Brownian motion and related stochastic calculus of It\^{o} type,} Stoch. Anal. Appl. 2(2007), 541-567.

\bibitem{peng4} S. Peng, \emph{Nonlinear expectations and stochastic calculus under uncertainty with robust CLT and G-Brownian motion,} Probability Theory and Stochastic Modelling, Springer, 2019.

\bibitem{P} S. Peng, \emph{Theory, methods and meaning of nonlinear expectation theory (in Chinese),} Sci Sin Math 47(2017), 1223-1254.


\bibitem{song} Y. Song \emph{Gradient Estimates for Nonlinear Diffusion Semigroups by Coupling Methods,} https://doi.org/10.1007/s11425-018-9541-6.


\bibitem{Wbook} F.-Y. Wang, \emph{Harnack Inequality and Applications for Stochastic Partial Differential Equations,} Springer, New York, 2013.

\bibitem{W2} F.-Y. Wang, \emph{Hypercontractivity and Applications for Stochastic Hamiltonian Systems,} J. Funct. Anal., 272(2017), 5360-5383.



\bibitem{WP} F.-Y. Wang, \emph{Estimates for invariant probability measures of degenerate SPDEs with singular and path-dependent drifts,} Probab. Theory Related Fields 172(2018), 1181-1214.

\bibitem{WA} F.-Y. Wang, \emph{Integrability conditions for SDEs and semilinear SPDEs,} Ann. Probab. 45(2017), 3223-3265.

\bibitem{WZ1} F.-Y. Wang, X. C. Zhang, \emph{Derivative formula and applications for degenerate diffusion semigroups,} J. Math. Pures Appl., 99(2013), 726-740.


\bibitem{Yang} F.-F. Yang, \emph{Harnack Inequality and Applications for SDEs Driven by $G$-Brownian motion,} arXiv:1808.08712.

\end{thebibliography}

\end{document}